\theoremstyle{plain}
\newtheorem{thm}{Theorem}[section]
\newtheorem{cor}[thm]{Corollary}
\newtheorem{prop}[thm]{Proposition}
\theoremstyle{definition}
\theoremstyle{remark}
\numberwithin{equation}{section}
\newcommand{\beast}{\begin{eqnarray*}}
\newcommand{\eeast}{\end{eqnarray*}}
\title{Spectral Radius and Degree Sequence\\ of a Graph}
\author{Chia-an Liu\footnote{Corresponding author. E-mail address: twister.imm96g@g2.nctu.edu.tw (C.-A Liu).}
~\footnote{Department of Applied Mathematics, National Chiao Tung University, Taiwan R.O.C..}
\and Chih-wen Weng$^\dag$}
\date{August 9, 2012}
\begin{document}
\maketitle

\bibliographystyle{plain}

%%%%%%%%%%%  The TEX begins here %%%%%%%%%%%%%%%%%%%%%%%%%%%%%%%

\bigskip

\begin{abstract}
Let $G$ be a simple connected graph of order $n$
with degree sequence $d_{1}, d_{2}, \cdots, d_{n}$ in non-increasing order.
The \emph{spectral radius} $\rho(G)$ of $G$ is the largest eigenvalue of its adjacency matrix.
For each positive integer $\ell$ at most $n,$
we give a sharp upper bound for $\rho(G)$ by a function of
$d_{1}, d_{2}, \cdots, d_{\ell},$ which generalizes a series of previous results.

\bigskip

{\noindent\bf Keywords:}
Graph, adjacency matrix, spectral radius, degree sequence.
\end{abstract}

%%%%%%%%%%%%%%%%   1.Introduction  %%%%%%%%%%%%%%%%%%%%%%%%%%%
\section{Introduction}   \label{s1}

Let $G$ be a simple connected graph of $n$ vertices and $m$ edges with degree sequence
$d_{1} \geq d_{2} \geq \cdots \geq d_{n}.$ The {\it adjacency matrix} $A=(a_{ij})$ of $G$ is a binary square matrix of order $n$ with rows and columns indexed by the vertex set $VG$ of $G$ such that for any $i, j\in VG$, $a_{ij}=1$ if $i,j$ are adjacent in $G.$
The \emph{spectral radius} $\rho(G)$ of $G$ is the largest eigenvalue of its adjacency matrix,
which has been studied by many authors.

\medskip

The following theorem is well-known \cite[Chapter 2]{m:88}.
\begin{thm}   \label{thm1.1}
If $A$ is a nonnegative irreducible $n \times n$ matrix with largest eigenvalue $\rho(A)$ and row-sums
$r_{1}, r_{2},\ldots,r_{n},$ then
\begin{equation}
\rho(A) \leq \max_{1 \leq i \leq n}r_{i}   \nonumber
\end{equation}
with equality if and only if the row-sums of $A$ are all equal.
\end{thm}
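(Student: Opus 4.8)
The plan is to reduce everything to the Perron--Frobenius theorem, which for a nonnegative irreducible matrix guarantees that $\rho(A)$ is itself an eigenvalue possessing an entrywise positive eigenvector. First I would fix such a vector $x=(x_{1},x_{2},\ldots,x_{n})^{T}$ with every $x_{i}>0$ and $Ax=\rho(A)x$, and choose an index $k$ with $x_{k}=\max_{1\le i\le n}x_{i}$. The inequality then falls out by reading off the $k$-th coordinate of the eigenvalue equation and replacing every $x_{j}$ by the larger value $x_{k}$:
\begin{equation}
\rho(A)\,x_{k}=\sum_{j=1}^{n}a_{kj}x_{j}\le\sum_{j=1}^{n}a_{kj}x_{k}=r_{k}x_{k}\le\Big(\max_{1\le i\le n}r_{i}\Big)x_{k}. \nonumber
\end{equation}
Since $x_{k}>0$ I may cancel it to obtain $\rho(A)\le\max_{i}r_{i}$. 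Note the estimate used only $a_{kj}\ge0$ and $x_{j}\le x_{k}$, so it records precisely where slack can occur.

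The easy direction of the equality characterization is the converse: if all row-sums equal some common value $r$, then the all-ones vector $\mathbf{1}$ satisfies $A\mathbf{1}=r\mathbf{1}$, so $r$ is an eigenvalue and $\rho(A)\ge r$; combined with the bound $\rho(A)\le\max_{i}r_{i}=r$ this forces $\rho(A)=r=\max_{i}r_{i}$.

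For the forward direction I would assume $\rho(A)=\max_{i}r_{i}$ and track the equality cases in the displayed chain. I expect the main obstacle to lie here, in leveraging irreducibility rather than in the arithmetic. Consider the set $S=\set{i:x_{i}=x_{k}}$ of coordinates attaining the maximum. For any $i\in S$ the same computation gives $\rho(A)x_{i}=\sum_{j}a_{ij}x_{j}\le r_{i}x_{i}\le(\max_{i}r_{i})x_{i}=\rho(A)x_{i}$, so both inequalities are tight; tightness of the first yields $\sum_{j}a_{ij}(x_{i}-x_{j})=0$, and since each summand is nonnegative ($a_{ij}\ge0$, $x_{i}-x_{j}\ge0$) we conclude $a_{ij}>0\Rightarrow x_{j}=x_{i}$, i.e. $j\in S$. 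Thus no positive entry of $A$ leads from $S$ out of $S$. Were $S$ a proper nonempty subset of $\set{1,\ldots,n}$, this would contradict irreducibility; hence $S$ is everything, all $x_{i}$ are equal, and substituting back into $Ax=\rho(A)x$ gives $r_{i}=\rho(A)$ for every $i$, so the row-sums are all equal. The delicate point throughout is to phrase the ``$S$ is closed under the support of $A$'' step so that it matches whichever definition of irreducibility one adopts (no invariant coordinate subspace, equivalently strong connectivity of the associated digraph).
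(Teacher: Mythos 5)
Your proof is correct and complete: the maximal-coordinate argument applied to a Perron--Frobenius positive eigenvector gives the bound, and your observation that the set $S$ of maximizing coordinates is closed under the support of $A$ (hence all of $\set{1,\ldots,n}$ by irreducibility) is exactly the standard way to settle the equality case. The paper itself gives no proof of this theorem --- it is quoted as well-known with a citation to Minc \cite{m:88} --- and your argument is precisely the classical one found there, so there is nothing to reconcile.
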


\medskip

In 1985 \cite[Corollary~2.3]{bh:85}, Brauldi and Hoffman showed the following result.
\begin{thm}   \label{thm1.2}
If $m \leq k(k-1)/2,$ then
\begin{equation}
\rho(G) \leq k-1   \nonumber
\end{equation}
with equality if and only if $G$ is isomorphic to
the complete graph $K_{n}$ of order $n.$
\end{thm}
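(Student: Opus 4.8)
The plan is to reduce everything to the single inequality $\rho(G)^2+\rho(G)\le 2m$, after which the bound is immediate: since $t\mapsto t^2+t$ is strictly increasing on $\RPlus$ and $(k-1)^2+(k-1)=k(k-1)\ge 2m$, the inequality $\rho^2+\rho\le 2m$ forces $\rho\le k-1$. So the whole problem becomes proving $\rho^2+\rho\le 2m$ together with a characterization of when it is tight.

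The key idea I would use is to apply Theorem~\ref{thm1.1} not to $A$ but to the matrix $B:=A^2+A$. Write $\lambda_1=\rho\ge\lambda_2\ge\cdots\ge\lambda_n$ for the eigenvalues of $A$. Since $G$ is connected, $A$ is nonnegative and irreducible; moreover $B$ is nonnegative, and because $B_{ij}\ge A_{ij}$ its off-diagonal support contains every edge of $G$, so $B$ is irreducible as well. As $|\lambda_i|\le\rho$ for all $i$, every eigenvalue $\lambda_i^2+\lambda_i$ of $B$ is at most $\rho^2+\rho$, so $\rho^2+\rho$ is in fact the \emph{largest} eigenvalue of $B$. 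Theorem~\ref{thm1.1} then yields $\rho^2+\rho\le\max_i r_i(B)$, where $r_i(B)$ denotes the $i$-th row sum of $B$. A short computation gives $r_i(A^2)=\sum_{k\sim i}d_k$ (the sum of the degrees of the neighbors of vertex $i$) and $r_i(A)=d_i$, hence $r_i(B)=d_i+\sum_{k\sim i}d_k$. Since the neighbors of $i$ are distinct vertices different from $i$, one has $\sum_{k\sim i}d_k\le\sum_{k\ne i}d_k=2m-d_i$, so $r_i(B)\le 2m$ for every $i$. Combining the two estimates gives $\rho^2+\rho\le\max_i r_i(B)\le 2m$, as desired.

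For the equality case, suppose $\rho=k-1$. Then equality must hold all along the chain $\rho^2+\rho\le\max_i r_i(B)\le 2m\le k(k-1)$. The first equality, via the equality clause of Theorem~\ref{thm1.1}, forces all row sums $r_i(B)$ to be equal, and the second forces this common value to be $2m$. But $r_i(B)=2m$ means $\sum_{k\sim i}d_k=2m-d_i=\sum_{k\ne i}d_k$, which—since $G$ is connected, so every vertex other than $i$ has positive degree—can occur only when $i$ is adjacent to all other vertices, i.e.\ $d_i=n-1$. As this holds for every $i$, the graph is $K_n$; the final equality $2m=k(k-1)$ then gives $\binom{n}{2}=\binom{k}{2}$, so $k=n$. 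Conversely, $\rho(K_n)=n-1$ realizes equality.

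The creative step—and the one I expect to be the main obstacle—is recognizing that passing to $B=A^2+A$ is exactly what converts the edge count $m$ into a usable row-sum bound, since $r_i(A^2+A)\le 2m$ is precisely the combinatorial input that a bound on $A$ alone cannot see. The only other point requiring care is the justification that $\rho^2+\rho$ is genuinely the largest eigenvalue of $B$ and that $B$ is irreducible, both of which are prerequisites for invoking Theorem~\ref{thm1.1} and, crucially, its equality clause.
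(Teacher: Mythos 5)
Your proof is correct, and it is worth noting at the outset that the paper itself contains no proof of Theorem~\ref{thm1.2}: the result is quoted from Brualdi and Hoffman \cite{bh:85} purely as background, so there is no internal argument to compare against. What you actually establish is the strictly stronger Stanley bound of Theorem~\ref{thm1.3}, since $\rho^2+\rho\le 2m$ is equivalent to $\rho\le\bigl(-1+\sqrt{1+8m}\bigr)/2$, and you then specialize via $2m\le k(k-1)=(k-1)^2+(k-1)$ and the monotonicity of $t\mapsto t^2+t$ on $[0,\infty)$; this is legitimate and non-circular, since your proof of $\rho^2+\rho\le 2m$ uses only Theorem~\ref{thm1.1}. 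Each step checks out: $B=A^2+A$ is nonnegative with support containing that of the irreducible $A$, hence irreducible; $\rho^2+\rho$ is indeed the largest eigenvalue of $B$ because $(t-\rho)(t+\rho+1)\le 0$ for all $t\in[-\rho,\rho]$; the row-sum computation $r_i(B)=d_i+\sum_{k\sim i}d_k\le\sum_{k}d_k=2m$ is right; and the equality analysis correctly threads the chain through the equality clause of Theorem~\ref{thm1.1}, using connectivity to get minimum degree at least $1$ (this needs $n\ge 2$; the case $n=1$ is trivial but should be flagged, since there ``every other vertex has positive degree'' is vacuous). Methodologically your argument is a close cousin of the paper's own proof of Theorem~\ref{thm1.6}: both apply the Perron--Frobenius row-sum bound to a matrix derived from $A$ with the same (or controlled) spectrum --- the paper uses the diagonal similarity $U^{-1}AU$, you use the polynomial $A^2+A$ --- and your observation that the polynomial substitution is what converts the global edge count $2m$ into per-row information is exactly the right diagnosis of why a similarity transformation alone cannot see $m$. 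The original Brualdi--Hoffman proof proceeds quite differently (quadratic forms and entry-rearrangement for general $(0,1)$-matrices, without a connectivity hypothesis, with extremal graphs $K_k$ plus isolated vertices); your version is adapted to the paper's standing assumption that $G$ is connected, which is why you cleanly get $G\cong K_n$ with $k=n$, and your remark that equality forces $k=n$ correctly tightens the slightly loose equality clause in the paper's statement.
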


\medskip

In 1987 \cite{s:87}, Stanley improved Theorem~\ref{thm1.2} and showed the following result.
\begin{thm}   \label{thm1.3}
\begin{equation}
\rho(G) \leq \frac{-1+\sqrt{1+8m}}{2}   \nonumber
\end{equation}
with equality if and only if $G$ is isomorphic to
the complete graph $K_{n}$ of order $n.$
\end{thm}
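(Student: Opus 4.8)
The plan is to apply Theorem~\ref{thm1.1} not to the adjacency matrix $A$ directly (which would only yield $\rho(G)\le d_1$), but to the auxiliary matrix $B:=A^2+A$. Because $G$ is connected, $A$ is nonnegative and irreducible; since $B\ge A$ entrywise, the underlying graph of $B$ contains the connected graph $G$, so $B$ is again nonnegative and irreducible. Let $x>0$ be a Perron eigenvector of $A$, so that $Ax=\rho(G)\,x$. Then $Bx=\bigl(\rho(G)^2+\rho(G)\bigr)x$, and since $x$ is strictly positive, the Perron--Frobenius theory identifies $\rho(G)^2+\rho(G)$ as the spectral radius $\rho(B)$.

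Next I would compute the row sums of $B$. The $i$th row sum of $A$ is $d_i$, and the $i$th row sum of $A^2$ is $\sum_{k\sim i}d_k$; hence the $i$th row sum of $B$ is $R_i=d_i+\sum_{k\sim i}d_k$. Applying Theorem~\ref{thm1.1} to $B$ gives
\[
\rho(G)^2+\rho(G)=\rho(B)\le \max_{1\le i\le n}R_i .
\]
The decisive yet elementary step is the bound $R_i\le 2m$, valid for every vertex $i$: partitioning the vertices into $\{i\}$, the neighbors of $i$, and the non-neighbors of $i$, and using $\sum_k d_k=2m$, one obtains
\[
R_i=2m-\sum_{\substack{k\ne i\\ k\not\sim i}}d_k\le 2m .
\]
Combining the two displays yields $\rho(G)^2+\rho(G)\le 2m$, and solving this quadratic inequality for $\rho(G)\ge 0$ gives precisely $\rho(G)\le\frac{-1+\sqrt{1+8m}}{2}$.

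The part I expect to require the most care is the equality discussion, because it must merge two separate equality conditions. If $\rho(G)=\frac{-1+\sqrt{1+8m}}{2}$, then both inequalities above are equalities. Equality in Theorem~\ref{thm1.1} forces all row sums $R_i$ of $B$ to be equal, while equality $\max_i R_i=2m$ forces this common value to be $2m$; thus $R_i=2m$ for every $i$, so $\sum_{k\ne i,\,k\not\sim i}d_k=0$ for every $i$. As $G$ is connected with $n\ge 2$, it has no isolated vertices, so each $i$ must be adjacent to every other vertex, i.e. $d_i=n-1$ for all $i$, which means $G\cong K_n$. The converse (that $K_n$ attains equality, where $\rho=n-1$ and $2m=n(n-1)$) is a direct check. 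The main obstacle is thus not any single computation but ensuring that the reduction to $B=A^2+A$ faithfully transports the equality case of Theorem~\ref{thm1.1} back into a structural statement about $G$.
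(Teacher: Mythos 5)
Your proof is correct, but note that the paper itself offers no proof of this statement: Theorem~\ref{thm1.3} is quoted from Stanley's 1987 paper \cite{s:87} as background, so the only comparison available is with the standard argument in the literature and with the paper's method for its own Theorem~\ref{thm1.6}. Stanley's classical route works directly with the Perron eigenvector $x$ of $A$: evaluating $\rho^2 x_u=\sum_{v\sim u}\sum_{w\sim v}x_w$ at a coordinate $u$ maximizing $x_u$ gives $\rho^2\le 2m-d_u$, and $\rho\le d_u$ then yields the key inequality $\rho^2+\rho\le 2m$, which is exactly equivalent to the stated bound. You reach the same inequality by a genuinely different reduction: applying Theorem~\ref{thm1.1} to $B=A^2+A$, whose row sums are $R_i=d_i+\sum_{k\sim i}d_k=2m-\sum_{k\ne i,\,k\not\sim i}d_k\le 2m$. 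All the steps check out --- $B$ inherits irreducibility from $A$, the positive Perron vector certifies $\rho(B)=\rho^2+\rho$, and your equality analysis correctly merges the two tightness conditions (all $R_i$ equal, and the common value $2m$) to force every vertex to be adjacent to all others, using connectedness to rule out degree-zero non-neighbors. What your route buys is a pleasing structural alignment with the paper's own technique for Theorem~\ref{thm1.6}, which likewise never touches the Perron vector directly but instead applies Theorem~\ref{thm1.1} to a derived matrix ($U^{-1}AU$ there, $A^2+A$ here) whose row sums encode the degree information; the one point worth making explicit in a final write-up is the trivial case $n=1$ (or the convention $n\ge 2$), where your appeal to ``no isolated vertices'' is needed.
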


\medskip

In 1998 \cite[Theorem~2]{h:98}, Yuan Hong improved  Theorem~\ref{thm1.3} and showed the following result.
\begin{thm}   \label{thm_genus}
\begin{equation}
\rho(G) \leq \sqrt{2m-n+1}   \nonumber
\end{equation}
with equality if and only if $G$ is isomorphic to the star $K_{1,n-1}$ or
to the complete graph $K_{n}.$
\end{thm}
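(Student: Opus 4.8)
The plan is to combine the Perron--Frobenius eigenvector of the adjacency matrix $A$ with an elementary degree identity. Since $G$ is connected, $A$ is nonnegative and irreducible, so there is a positive eigenvector $x=(x_1,\dots,x_n)^{T}$ with $Ax=\rho(G)x$; its existence is exactly the Perron--Frobenius theory underlying Theorem~\ref{thm1.1}. Let $p$ be a vertex with $x_p=\max_i x_i$, and keep the normalization implicit. The whole argument then rests on squaring the eigenvalue equation at the coordinate $p$ and controlling the result by degrees.

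First I would read off the $p$-th coordinate of $A^{2}x=\rho(G)^{2}x$, writing $(A^{2}x)_p=\sum_{j\sim p}\sum_{k\sim j}x_k$. Separating the $d_p$ terms with $k=p$ (each contributing $x_p$) from the remaining $\sum_{j\sim p}(d_j-1)$ terms, and using $x_k\le x_p$ for every $k$, the right-hand side is at most $x_p\bigl(d_p+\sum_{j\sim p}(d_j-1)\bigr)=x_p\sum_{j\sim p}d_j$. Dividing by $x_p>0$ yields the bound $\rho(G)^{2}\le \sum_{j\sim p}d_j$, the sum of the degrees of the neighbours of $p$.

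The second, conceptually decisive, step is the identity
\[
2m-n+1-\sum_{j\sim p}d_j=\sum_{i\ne p,\ i\not\sim p}(d_i-1),
\]
obtained by writing $2m=d_p+\sum_{j\sim p}d_j+\sum_{i\ne p,\,i\not\sim p}d_i$ together with the vertex count $n=1+d_p+|\{i\ne p: i\not\sim p\}|$. Because $G$ is connected, every $d_i\ge 1$, so the right-hand side is nonnegative and $\sum_{j\sim p}d_j\le 2m-n+1$. Chaining this with the first step gives $\rho(G)^{2}\le 2m-n+1$, the desired inequality.

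The hard part will be the equality analysis, since the inequality itself is short. Equality forces both steps to be tight: tightness of the identity says every vertex neither equal nor adjacent to $p$ is a pendant, and tightness of the first step says $x_k=x_p$ for every $k\ne p$ lying on a path $p\sim j\sim k$. I would first rule out non-neighbours of $p$: such a vertex $w$ is a pendant attached to a neighbour of $p$, so $x_w=x_p$, whereas $\rho(G)x_w$ equals the single neighbour value $\le x_p$, forcing $\rho(G)\le 1$ and contradicting connectivity unless no such $w$ exists; hence $d_p=n-1$. Splitting the other vertices into pendants (adjacent only to $p$) and a set $S$, tightness gives $x_k=x_p$ for $k\in S$, and since a vertex of $S$ can be adjacent only to $p$ and to other vertices of $S$, all its neighbours attain the maximum, so its degree equals $\rho(G)$. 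A short count comparing $|S|$ with $\rho(G)$ against the pendant contribution to $\rho(G)x_p=\sum_{j\sim p}x_j$ then separates the two cases cleanly: either $S=\emptyset$, giving the star $K_{1,n-1}$, or there are no pendants and every vertex has degree $n-1$, giving $K_n$. Organizing these two tightness conditions so that the star and complete cases emerge without overlap is where I expect the real work to lie.
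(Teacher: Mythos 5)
Your argument is correct and, once the closing count is written out, complete; but note that the paper contains no proof of Theorem~\ref{thm_genus} at all: it is quoted as background from Hong's 1998 paper, and within the paper's own framework it is subsumed by the main Theorem~\ref{thm1.6} (take $\ell=n$, so that $\phi_n=\frac{d_n-1+\sqrt{(d_n+1)^2+4(2m-nd_n)}}{2}$, which is the bound of Theorem~\ref{thm1.4}, and use $d_n\ge 1$ to get $\phi_n\le\sqrt{2m-n+1}$), whose proof runs through the diagonal similarity $B=U^{-1}AU$ and the row-sum bound of Theorem~\ref{thm1.1} --- a global, weighted row-sum method quite different from yours. What you propose --- the Perron vector, squaring the eigenvalue equation at a maximal coordinate $p$ to get $\rho(G)^2\le\sum_{j\sim p}d_j$, then the identity $2m-n+1-\sum_{j\sim p}d_j=\sum_{i\ne p,\,i\not\sim p}(d_i-1)\ge0$ --- is essentially Hong's original local argument; it buys a short, self-contained, elementary proof, but unlike the similarity method it does not extend to the refined degree-sequence bounds $\phi_\ell$ that are the paper's point. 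Two details in your equality analysis need nailing down. First, a pendant non-neighbour $w$ of $p$ is attached to a neighbour of $p$ only because otherwise its unique neighbour $u$ would itself be a pendant non-neighbour and $\{w,u\}$ would be a connected component, impossible for connected $G$ with $n\ge3$; with that, $x_w=x_p$ and $\rho(G)x_w=x_u\le x_p$ give $\rho(G)\le1$, a contradiction, so indeed $d_p=n-1$. Second, your final count does close: with $t=|S|$, every $k\in S$ has all neighbours (namely $p$ and vertices of $S$) at value $x_p$, so $d_k=\rho(G)$, and its $\rho(G)-1$ neighbours other than $p$ lie in $S$, whence $\rho(G)\le t$ when $S\ne\emptyset$; the eigenvalue equation at $p$, with each pendant contributing $x_p/\rho(G)$, gives $\rho(G)^2=t\rho(G)+(n-1-t)$, and $t\rho(G)\ge\rho(G)^2$ then forces $t=n-1$, hence no pendants, $\rho(G)=n-1$ and $G\cong K_n$, while $S=\emptyset$ gives $G\cong K_{1,n-1}$; do also record the trivial sufficiency check $\rho(K_{1,n-1})=\sqrt{n-1}$ and $\rho(K_n)=n-1$.
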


\medskip

In 2001 \cite[Theorem~2.3]{hsf:01}, Hong et al. improved Theorem~\ref{thm_genus} and showed the following result.
\begin{thm}   \label{thm1.4}
\begin{equation}
\rho(G) \leq \frac{d_{n}-1+\sqrt{(d_{n}+1)^2 + 4(2m-n d_{n})}}{2}   \nonumber
\end{equation}
with equality if and only if $G$ is regular
or there exists $2 \leq t \leq n$ such that $d_{1}=d_{t-1}=n-1$ and $d_{t}=d_{n}.$
\end{thm}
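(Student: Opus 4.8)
The plan is to reduce the claimed bound to a single scalar inequality in $\rho:=\rho(G)$ and then prove that inequality by averaging a pointwise degree estimate against the Perron eigenvector. A direct computation shows that the right-hand side $\beta$ is the larger root of $t^{2}-(d_{n}-1)t-(2m-(n-1)d_{n})=0$. Since $t\mapsto t^{2}-(d_{n}-1)t$ is strictly increasing for $t>(d_{n}-1)/2$, and since one checks $\rho\ge 2m/n\ge d_{n}$ and $\beta\ge d_{n}$ (the latter because $2m\ge nd_{n}$), proving $\rho\le\beta$ is equivalent to proving
\[
\rho^{2}-(d_{n}-1)\rho\le 2m-(n-1)d_{n},
\]
with equality in one iff equality in the other. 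So the whole theorem reduces to this inequality and its equality case.

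Let $x=(x_{1},\dots,x_{n})^{T}>0$ be the Perron eigenvector, strictly positive because $G$ is connected and hence $A$ is irreducible, and let $\mathbf{1}$ be the all-ones vector. Then $A\mathbf{1}$ is the degree vector $(d_{1},\dots,d_{n})^{T}$, and $A^{2}\mathbf{1}=A(A\mathbf{1})$ has $i$-th entry $S_{i}:=\sum_{j\sim i}d_{j}$ (the sum of the degrees of the neighbours of $i$). Using $Ax=\rho x$, $A^{2}x=\rho^{2}x$, and the symmetry of $A$, I would record the two identities $\sum_{i}d_{i}x_{i}=\mathbf{1}^{T}Ax=\rho\sum_{i}x_{i}$ and $\sum_{i}S_{i}x_{i}=\mathbf{1}^{T}A^{2}x=\rho^{2}\sum_{i}x_{i}$. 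Forming the combination $\sum_{i}S_{i}x_{i}-(d_{n}-1)\sum_{i}d_{i}x_{i}$ then yields the exact identity
\[
\bigl(\rho^{2}-(d_{n}-1)\rho\bigr)\sum_{i}x_{i}=\sum_{i}\bigl(S_{i}-(d_{n}-1)d_{i}\bigr)x_{i}.
\]

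The key step is the pointwise bound $S_{i}-(d_{n}-1)d_{i}\le 2m-(n-1)d_{n}$ for every $i$. This is pure counting: the $n-1-d_{i}$ vertices distinct from $i$ and nonadjacent to $i$ each have degree at least $d_{n}$, so $S_{i}=2m-d_{i}-\sum_{j\ne i,\,j\not\sim i}d_{j}\le 2m-d_{i}-(n-1-d_{i})d_{n}$; substituting this into $S_{i}-(d_{n}-1)d_{i}$ makes all the $d_{i}$-terms cancel and leaves exactly $2m-(n-1)d_{n}$. Since every $x_{i}>0$, inserting this into the displayed identity and dividing by $\sum_{i}x_{i}>0$ gives the required inequality. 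I expect the main obstacle to be conceptual rather than computational: the matrix that naturally realizes the eigenvalue $\rho^{2}-(d_{n}-1)\rho$ is $A^{2}-(d_{n}-1)A$, whose $i$-th row sum is precisely $S_{i}-(d_{n}-1)d_{i}$, but this matrix has negative off-diagonal entries on edges with fewer than $d_{n}-1$ common neighbours, so Theorem~\ref{thm1.1} cannot be applied to it directly. Averaging the row-sum bound against the positive weights $x_{i}$, rather than invoking a maximum-row-sum principle, is exactly what circumvents the loss of nonnegativity.

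For the equality case I would trace back through the argument. Equality $\rho=\beta$ holds iff $\sum_{i}\bigl(2m-(n-1)d_{n}-(S_{i}-(d_{n}-1)d_{i})\bigr)x_{i}=0$; as each summand is a nonnegative scalar times $x_{i}>0$, this forces $S_{i}-(d_{n}-1)d_{i}=2m-(n-1)d_{n}$ for every $i$, i.e.\ equality in the counting step at every vertex. That means every vertex nonadjacent to $i$ has degree exactly $d_{n}$, for all $i$; equivalently, any vertex of degree greater than $d_{n}$ must be adjacent to all others and so has degree $n-1$. Hence equality forces all degrees into $\{d_{n},n-1\}$, which is precisely the stated dichotomy: either the degrees coincide ($G$ regular) or there is some $2\le t\le n$ with $d_{1}=\dots=d_{t-1}=n-1$ and $d_{t}=\dots=d_{n}$. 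Conversely, in any connected graph whose degrees lie in $\{d_{n},n-1\}$ a vertex of degree $n-1$ is adjacent to everything, so every non-neighbour of any vertex has degree $d_{n}$; the counting step is then an equality for each $i$ and the bound is attained, which establishes the ``if and only if''.
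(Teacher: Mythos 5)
Your proof is correct, but it follows a genuinely different route from the one this paper takes. The paper never proves Theorem~\ref{thm1.4} in isolation: it obtains it as the case $\ell=n$ of Theorem~\ref{thm1.6} (note $\sum_{i=1}^{n-1}(d_i-d_n)=2m-nd_n$, so $\phi_n$ is exactly the bound here), and that proof works by a diagonal similarity $B=U^{-1}AU$ with weights $x_i=1+(d_i-d_\ell)/(\phi_\ell+1)$, followed by the maximum row-sum principle (Theorem~\ref{thm1.1}); the sufficiency of the equality condition is even outsourced to Theorem~\ref{thm1.5}. You instead reduce the bound to the scalar inequality $\rho^2-(d_n-1)\rho\le 2m-(n-1)d_n$ and prove it by pairing the row sums of $A^2-(d_n-1)A$ against the Perron eigenvector, with the pointwise counting estimate $S_i-(d_n-1)d_i\le 2m-(n-1)d_n$ coming from the fact that the $n-1-d_i$ non-neighbours of $i$ have degree at least $d_n$; your remark that the eigenvector averaging is what rescues the argument from the failure of nonnegativity of $A^2-(d_n-1)A$ is exactly right, and your equality analysis (every non-neighbour of every vertex has degree $d_n$, hence all degrees lie in $\{d_n,n-1\}$, and conversely) is sound and, unlike the paper's, fully self-contained in both directions. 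This is much closer in spirit to the original argument of Hong, Shu and Fang than to the present paper's method. What each approach buys: yours needs all $n$ degrees (through $2m$) and is tailored to this one bound, but gives a clean, self-checking equality case; the paper's diagonal-similarity technique uses only the partial data $d_1,\dots,d_\ell$ and therefore yields the whole family of bounds $\phi_\ell$ at once, which is the point of the paper. One step you should make explicit is the standard fact $\rho\ge 2m/n$ (Rayleigh quotient with the all-ones vector), which you use to place $\rho$ on the increasing branch of $t\mapsto t^2-(d_n-1)t$ before comparing with $\beta$; as stated it is asserted without justification, though it is routine.
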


\medskip

%In 2011 \cite[Theorem~2.3]{d:11}, Kinkar Ch. Das also gave an upper bound for the spectral radius
%as a part of the main results in that paper.
%\begin{thm}   \label{thm_kcdas}
%\begin{equation}
%\rho(G) \leq \frac{d_{2}-1+\sqrt{(d_{2}+1)^{2}+4(d_{1}-d_{2})}}{2}
%\nonumber
%\end{equation}
%with equality if and only if $G$ is regular, or $d_{1}=n-1$ and $d_{2}=d_{n}.$
%\end{thm}

%\medskip

In 2004 \cite[Theorem~2.2]{sw:04}, Jinlong Shu and Yarong Wu improved Theorem~\ref{thm1.1}
in the case that $A$ is the adjacency matrix of $G$ by showing the following result.
\begin{thm}   \label{thm1.5}
For $1 \leq \ell \leq n,$
\begin{equation}
\rho(G) \leq \frac{d_{\ell}-1+\sqrt{(d_{\ell}+1)^{2}+4(\ell-1)(d_{1}-d_{\ell})}}{2}   \nonumber
\end{equation}
with equality if and only if $G$ is regular
or there exists $2 \leq t \leq \ell$ such that $d_{1}=d_{t-1}=n-1$ and $d_{t}=d_{n}.$
\end{thm}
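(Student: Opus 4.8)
The plan is to prove the statement in the cleaner equivalent form $(\rho+1)(\rho-d_\ell)\le(\ell-1)(d_1-d_\ell)$, whose larger root in $\rho$ is exactly the asserted bound $\theta$; indeed, clearing the square root shows $\theta$ is the larger root of $x^2-(d_\ell-1)x-d_\ell-(\ell-1)(d_1-d_\ell)=0$, and $x^2-(d_\ell-1)x-d_\ell=(x+1)(x-d_\ell)$. The tool is Theorem~\ref{thm1.1} applied not to $A$ itself but to a diagonally scaled copy $B=U^{-1}AU$, where $U=\mathrm{diag}(u_1,\dots,u_n)$ is a positive diagonal matrix. Then $B$ is nonnegative with the same zero pattern as $A$, hence irreducible since $G$ is connected, and $\rho(B)=\rho(A)=\rho(G)$. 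The row sum of $B$ at vertex $i$ is $R_i=u_i^{-1}\sum_{j\sim i}u_j$, so Theorem~\ref{thm1.1} gives $\rho(G)\le\max_i R_i$, and everything reduces to choosing the $u_i$ so that this maximum equals $\theta$.

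First I would split the vertex set by the degree ordering into the $\ell-1$ largest-degree vertices $S=\{1,\dots,\ell-1\}$ and the remainder, setting $u_i=t$ on $S$ and $u_i=1$ off $S$, where $t=1+(d_1-d_\ell)/(\theta+1)\ge 1$. For $i\in S$, writing $p_i$ for the number of neighbors of $i$ inside $S$, one computes $R_i=d_i/t+p_i(t-1)/t$, and since $d_i\le d_1$ and $p_i\le\ell-2$ with $t\ge 1$ this is at most $[d_1+(\ell-2)(t-1)]/t$. For $i\notin S$ one has $R_i=d_i+p_i(t-1)\le d_\ell+(\ell-1)(t-1)$, using $d_i\le d_\ell$ and $p_i\le\ell-1$. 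The point of this particular $t$ is that both upper bounds collapse to the same number: the second equals $\theta$ by the defining relation $(\theta+1)(\theta-d_\ell)=(\ell-1)(d_1-d_\ell)$, and a short computation using the same relation shows the first equals $\theta$ as well, the apparent mismatch between $\ell-2$ and $\ell-1$ cancelling exactly. Hence $\max_i R_i\le\theta$ and $\rho(G)\le\theta$.

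For the equality statement I would invoke the equality clause of Theorem~\ref{thm1.1}: $\rho(G)=\theta$ forces all row sums of $B$ to be equal, hence every $R_i=\theta$, so every inequality used above is tight. When $d_1>d_\ell$ (so $t>1$), tightness for $i\in S$ forces $d_i=d_1$ and $p_i=\ell-2$, i.e. $S$ induces a clique whose vertices all have degree $d_1$; tightness for $i\notin S$ forces $d_i=d_\ell$ and $p_i=\ell-1$, i.e. every vertex outside $S$ is adjacent to all of $S$. Counting then shows each vertex of $S$ is adjacent to everything, so $d_1=\dots=d_{\ell-1}=n-1$, while the remaining vertices share the common degree $d_\ell=\dots=d_n$, which is the asserted structure. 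When $d_1=d_\ell$ one has $t=1$, $U=I$, $\theta=d_1$, and the equality clause applied directly to $A$ gives equality if and only if $G$ is regular.

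The routine part is the two bounds $R_i\le\theta$; the delicate part, and the step I would be most careful with, is the equality analysis, namely checking that the two separate maximizations (the pair $d_i\le d_1$, $p_i\le\ell-2$ and the pair $d_i\le d_\ell$, $p_i\le\ell-1$) can be made simultaneously tight, and then translating this tightness into precisely the stated degree-sequence condition while handling the degenerate case $d_1=d_\ell$ separately.
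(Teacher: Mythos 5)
Your proof of the inequality itself is correct, and it is essentially the technique this paper uses: the paper never proves Theorem~\ref{thm1.5} (it is quoted from Shu and Wu \cite{sw:04}), but its proof of the stronger Theorem~\ref{thm1.6} is exactly your diagonal-similarity row-sum argument, the only difference being that the paper takes vertex-dependent weights $x_i=1+(d_i-d_\ell)/(\phi_\ell+1)$ on the first $\ell-1$ vertices, while you take the single weight $t=1+(d_1-d_\ell)/(\theta+1)$, i.e.\ the specialization obtained by replacing each $d_i$ ($i<\ell$) by $d_1$. Your algebra checks out: both row-sum bounds collapse to $\theta$ via the relation $(\theta+1)(\theta-d_\ell)=(\ell-1)(d_1-d_\ell)$, and your necessity analysis is sound --- when $d_1>d_\ell$, equal row sums force $d_1=\cdots=d_{\ell-1}=n-1$ and $d_\ell=\cdots=d_n$, which is the stated condition with $t=\ell$.

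The genuine gap is the sufficiency direction of the equality clause. You prove ``equality $\Rightarrow$ structure'' and ``regular $\Rightarrow$ equality,'' but never that the structural condition implies equality. For $t=\ell$ this is free from your own computation: if $d_1=d_{\ell-1}=n-1$ and $d_\ell=d_n$, then every $R_i$ equals $\theta$ exactly (no inequality is needed), and Theorem~\ref{thm1.1} in the direction ``all row sums equal $\Rightarrow$ $\rho$ equals the common row sum'' gives $\rho(G)=\theta$; you should state this step. More seriously, for $t<\ell$ the ``if'' direction as literally quoted is not provable, because it is false: for the star $K_{1,3}$ ($n=4$) with $\ell=3$, the condition holds with $t=2$ (since $d_1=3=n-1$ and $d_2=d_4=1$), yet $\rho=\sqrt{3}$ while the bound is $\frac{0+\sqrt{4+16}}{2}=\sqrt{5}$. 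Unlike $\phi_\ell$ in Theorem~\ref{thm1.6}, which uses $\sum_{i=1}^{\ell-1}(d_i-d_\ell)$ and therefore adapts to the actual degree sum (for the star, $\phi_3=\sqrt{3}$ and equality does hold), the bound of Theorem~\ref{thm1.5} replaces that sum by $(\ell-1)(d_1-d_\ell)$, and with the structure present for some $t<\ell$ and $G\neq K_n$ the bound is strictly larger than $\rho$. So the correct equality condition for this particular bound is precisely the $t=\ell$ case your necessity argument produces; your write-up should both add the easy $t=\ell$ verification and flag that the quantifier ``there exists $2\le t\le\ell$'' cannot be taken at face value here.
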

Moreover, they also showed in \cite[Theorem~2.5]{sw:04} that if $p+q \geq d_{1}+1$
then Theorem~\ref{thm1.5} improves Theorem~\ref{thm1.4}
where $p$ is the number of vertices with the largest degree $d_{1}$
and $q$ is the number of vertices with the second largest degree.
The special case $\ell=2$ of Theorem~\ref{thm1.5} is reproved \cite{d:11}.

\medskip

In this research, we present a sharp upper bound of $\rho(G)$ in terms of the degree sequence of $G,$
which improves all the above theorems.

%\begin{defn}    \label{defn1.6}
%For $1 \leq \ell \leq n,$ let
%\begin{equation}
%\phi_{\ell} = \frac{d_{\ell}-1+\sqrt{(d_{\ell}+1)^2+4\sum_{i=1}^{\ell-1}(d_{i} - d_{\ell})}}{2} \geq d_{\ell}.   \nonumber
%\end{equation}
%\end{defn}

\medskip

\begin{thm}   \label{thm1.6}
For $1 \leq \ell \leq n,$
$$\rho(G) \leq \phi_{\ell}:= \frac{d_{\ell}-1+\sqrt{(d_{\ell}+1)^2+4\sum_{i=1}^{\ell-1}(d_{i} - d_{\ell})}}{2},$$
with equality if and only if $G$ is regular
or there exists $2 \leq t \leq \ell$ such that $d_{1}=d_{t-1}=n-1$ and $d_{t}=d_{n}.$
\end{thm}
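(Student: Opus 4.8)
The plan is to reduce the bound to a single scalar quadratic inequality and then extract that inequality from the eigenvalue equation of the adjacency matrix. Since $\phi_{\ell}$ is the larger root of
\[ f(t) := t^2 - (d_{\ell}-1)t - d_{\ell} - \sum_{i=1}^{\ell-1}(d_{i}-d_{\ell}) \]
and $f$ opens upward, the claim $\rho(G)\le\phi_{\ell}$ is equivalent to $f(\rho)\le 0$, i.e.\ to $(\rho-d_{\ell})(\rho+1)\le\sum_{i=1}^{\ell-1}(d_{i}-d_{\ell})$, writing $\rho=\rho(G)$. As $G$ is connected, $d_{\ell}\ge 1$, and one checks $\phi_{\ell}\ge d_{\ell}$, so the case $\rho\le d_{\ell}$ is immediate (left side $\le 0$, right side $\ge 0$); hence I may assume $\rho>d_{\ell}$. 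To set up, let $x>0$ be a Perron eigenvector, $Ax=\rho x$, relabel the vertices so that $x_{1}\ge x_{2}\ge\cdots\ge x_{n}>0$, and normalize $x_{1}=1$; for each vertex $v$ the eigen-equation is $\rho x_{v}=\sum_{j\sim v}x_{j}$.

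The heart of the argument is a two-step estimate. Starting from $\rho x_{1}=\sum_{j\sim1}x_{j}$ together with the two-step expansion $\rho^{2}x_{1}=\sum_{j\sim1}\sum_{k\sim j}x_{k}$, I would isolate the contribution of the $\ell-1$ vertices carrying the largest eigenvector entries from that of all remaining vertices. Two elementary bounds drive the estimate: since $x$ is non-increasing, each neighbour-sum $\sum_{j\sim v}x_{j}$ is at most the sum of the $\deg(v)$ largest entries of $x$; and for any vertex set $T$ one has $\sum_{v\in T}\deg(v)\le d_{1}+\cdots+d_{|T|}$. The remaining (light) neighbours are controlled through $d_{\ell}$ and $x_{\ell}$, while the exceptional vertices contribute their excess degrees. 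Collecting terms should yield $(\rho-d_{\ell})(\rho+1)\le\sum_{v}(\deg(v)-d_{\ell})$, the sum running over at most $\ell-1$ vertices on which $\deg(v)\ge d_{\ell}$; applying the rearrangement bound to replace these degrees by $d_{1},\dots,d_{\ell-1}$ then delivers the quadratic inequality, hence $\rho\le\phi_{\ell}$.

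The principal obstacle is that the ordering of $x$ need not match the ordering of the degrees: the vertex with the $i$-th largest eigenvector entry need not be the vertex of $i$-th largest degree. Bounding each exceptional vertex's contribution by $(d_{i}-d_{\ell})$ rather than by the uniform $(d_{1}-d_{\ell})$ is exactly what upgrades Theorem~\ref{thm1.5} to the sharper partial sum $\sum_{i=1}^{\ell-1}(d_{i}-d_{\ell})$, so making the rearrangement step rigorous—verifying that the inequality direction survives when vertex degrees are replaced by the sorted sequence—is the crux. I expect this to demand a careful accounting of which neighbours are double-counted in the two-step sum, and of how the truncation at the $\ell$-th largest eigenvector entry interacts with the degree sorting.

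For the equality characterization I would track each inequality used above. Equality in the neighbour-sum estimates forces $x$ to be constant across the relevant vertices, and equality in the rearrangement forces the heavy eigenvector vertices to coincide with the maximum-degree vertices; untangling these constraints should give precisely the two stated cases, namely that $G$ is regular, or that there is $2\le t\le\ell$ with $d_{1}=\cdots=d_{t-1}=n-1$ and $d_{t}=\cdots=d_{n}$. A final consistency check is that $\ell=n$ recovers Theorem~\ref{thm1.4}, since $\sum_{i=1}^{n-1}(d_{i}-d_{n})=2m-nd_{n}$, and that bounding $\sum_{i=1}^{\ell-1}(d_{i}-d_{\ell})\le(\ell-1)(d_{1}-d_{\ell})$ recovers Theorem~\ref{thm1.5}, confirming the asserted improvements.
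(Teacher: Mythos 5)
Your reduction of the bound to the scalar inequality $(\rho-d_{\ell})(\rho+1)\le\sum_{i=1}^{\ell-1}(d_{i}-d_{\ell})$ is correct, but the proposal never actually establishes that inequality: the two-step expansion $\rho^{2}x_{1}=\sum_{j\sim 1}\sum_{k\sim j}x_{k}$ together with ``collecting terms should yield\dots'' is where the entire mathematical content would have to live, and you yourself name the fatal issue without resolving it --- the vertices carrying the $\ell-1$ largest eigenvector entries need not be the vertices of the $\ell-1$ largest degrees, and a vertex outside the eigenvector-top set may well have degree larger than $d_{\ell}$. Your two elementary bounds (neighbour sums majorized by the largest entries of $x$, and $\sum_{v\in T}\deg(v)\le d_{1}+\cdots+d_{|T|}$ for any $T$) do not by themselves control this mismatch: the corrections you need are indexed by eigenvector order while the available degree data is sorted by degree, and the deferred ``rearrangement step'' is precisely where the proof must happen, not a routine verification. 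Already for $\ell=2$ this eigenvector route (Das's proof, reference [7] in the paper) requires a delicate case analysis over whether the largest eigenvector entry sits at a maximum-degree vertex; for general $\ell$ you give no such argument, so as written this is a program rather than a proof. The equality characterization has the same status: ``untangling these constraints should give precisely the two stated cases'' verifies neither direction, and in particular sufficiency --- that the listed graphs actually attain $\phi_{\ell}$ --- needs its own argument, which you do not sketch.

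For contrast, the paper sidesteps the ordering problem entirely. It labels vertices by degree, forms $B=U^{-1}AU$ with $U=\mathrm{diag}(x_{1},\ldots,x_{\ell-1},1,\ldots,1)$ and the \emph{explicit constants} $x_{i}=1+(d_{i}-d_{\ell})/(\phi_{\ell}+1)$, checks that every row sum of $B$ is at most $\phi_{\ell}$, and invokes the row-sum bound of Theorem~\ref{thm1.1}; because the weights are constants attached to the degree-sorted labels rather than Perron-vector entries, no comparison between two orderings ever arises. Equality is then handled in two short steps: sufficiency by noting $\phi_{\ell}$ is at most the Shu--Wu bound and citing the equality case of Theorem~\ref{thm1.5}, and necessity by tracking which entries $a_{ik}$ are forced to equal $1$ when the row-sum estimates are tight. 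If you want to salvage your plan, replacing the Perron-vector bookkeeping by this diagonal-similarity device is the standard way to make weighted row-sum estimates rigorous.
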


This result improves Theorem~\ref{thm1.4} and Theorem~\ref{thm1.5}
since $\phi_{n}$ is exactly the upper bounds in Theorem~\ref{thm1.4} and  is at most the upper bound appearing in Theorem~\ref{thm1.5}.

\medskip

Note that the number $\phi_\ell$ defined in Theorem~\ref{thm1.6} is at least $d_\ell.$
The sequence $\phi_{1}, \phi_{2}, \cdots, \phi_n$ is not necessary to be non-increasing. %decreasing (in a non-strict sense).
We show that this sequence is first non-increasing and then non-decreasing, and determine its lowest value in Section~\ref{s4}.

\section{Proof of Theorem~\ref{thm1.6}}   \label{s3}
\begin{proof}
Let the vertices be labeled by $1,2,\ldots,n$ with degrees $d_{1} \geq d_{2} \geq \cdots \geq d_{n},$ respectively.
For each $1 \leq i \leq \ell-1,$ let $x_{i} \geq 1$ be a variable to be determined later.
Let $U=diag(x_{1},x_{2},\ldots,x_{\ell-1},1,1,\ldots,1)$ be a diagonal matrix of size $n \times n$.
Then $U^{-1}=diag(x_1^{-1}, x_2^{-1},\ldots,x_{\ell-1}^{-1},1,1,\ldots,1).$
% define B
Let $B=U^{-1}AU.$
Note that $A$ and $B$ have the same eigenvalues.
% introduce row-sums
Let $r_{1},r_{2},\ldots,r_{n}$ be the row-sums of $B.$
Then for $1 \leq i \leq \ell-1$ we have
\begin{eqnarray}
r_{i} &=& \sum_{k=1}^{\ell-1}\frac{x_{k}}{x_{i}}a_{ik} + \sum_{k=\ell}^{n}\frac{1}{x_{i}}a_{ik}   \nonumber
= \frac{1}{x_{i}}\sum_{k=1}^{n}a_{ik} + \frac{1}{x_{i}}\sum_{k=1}^{\ell-1}(x_{k}-1)a_{ik}\\
&\leq& \frac{1}{x_{i}}d_{i}+\frac{1}{x_{i}} \left( \sum_{k=1,k \neq i}^{\ell-1}x_{k}-(\ell-2) \right),   \label{eq3.1}
\end{eqnarray}
and for $\ell \leq j \leq n$ we have
\begin{eqnarray}
r_{j} &=& \sum_{k=1}^{\ell-1}x_{k}a_{jk} + \sum_{k=\ell}^{n}a_{ik}   \nonumber
=\sum_{k=1}^{n}a_{jk} + \sum_{k=1}^{\ell-1}(x_{k}-1)a_{jk}\\
&\leq& d_{\ell} + \left( \sum_{k=1}^{\ell-1}x_{k}-(\ell-1) \right).   \label{eq3.2}
\end{eqnarray}
For $1 \leq i \leq \ell-1$ let
\begin{equation}
x_{i}=1+\frac{d_{i}-d_{\ell}}{\phi_{\ell}+1} \geq 1,   \label{eq3.4}
\end{equation}
where $\phi_{\ell}$ is defined in Theorem~\ref{thm1.6}.
Then for $1 \leq i \leq \ell-1$ we have
\begin{equation}
r_{i} \leq \frac{1}{x_{i}}d_{i}+\frac{1}{x_{i}} \left( \sum_{k=1,k \neq i}^{\ell-1}x_{k}-(\ell-2) \right) = \phi_{\ell},
\nonumber
\end{equation}
and for $\ell \leq j \leq n$ we have
\begin{equation}
r_{j} \leq d_{\ell} + \left( \sum_{k=1}^{\ell-1}x_{k}-(\ell-1) \right) = \phi_{\ell}.  \nonumber
\end{equation}
Hence by Theorem~\ref{thm1.1},
\begin{equation}
\rho(G) = \rho(B) \leq \max_{1 \leq i \leq n}\{r_{i}\} \leq \phi_{\ell}.   \label{eq3.5}
\end{equation}
The first part of Theorem~\ref{thm1.6} follows.

\medskip

%%%%%%%%%%  Sufficiency Begin %%%%%%%%%%%%
The sufficient condition of $\phi_{\ell}=\rho(G)$ follows from the fact that
\begin{equation}
\phi_{\ell} \leq \frac{d_{\ell}-1+\sqrt{(d_{\ell}+1)^{2}+4(\ell-1)(d_{1}-d_{\ell})}}{2}   \nonumber
\end{equation}
and applying the second part in Theorem~\ref{thm1.5}.
%%%%%%%%%%  Sufficiency End %%%%%%%%%%%%

\medskip

%%%%%%%%%%  Necessity Begin %%%%%%%%%%%%
To prove the necessary condition of $\phi_{\ell}=\rho(G)$, suppose $\phi_{\ell}=\rho(G).$
Then the equalities in~\eqref{eq3.1} and~\eqref{eq3.2} all holds.
If $d_{1}=d_{\ell},$ then $d_{1}=\phi_{1}=\phi_{\ell}=\rho(G),$ and $G$ is regular by the second part of Theorem~\ref{thm1.1}.
Suppose $2 \leq t \leq \ell$ such that $d_{t-1} > d_{t} = d_{\ell}.$
Then $x_{i} > 1$ for $1 \leq i \leq t-1$ by~\eqref{eq3.4}.
For each $1 \leq i \leq \ell-1,$ the equality in~\eqref{eq3.1} implies that $a_{ik}=1$ for $1 \leq k \leq t-1,$ $k \neq i.$
For each $\ell \leq j \leq n,$ the equality in~\eqref{eq3.2} implies that $a_{jk}=1$
for $1 \leq k \leq t-1$ and $d_{j}=d_{\ell}.$
Hence $n-1=d_{1}=d_{t-1} > d_{t}=d_{\ell}=d_{n}.$
%%%%%%%%%%  Necessity End %%%%%%%%%%%%

\medskip

We complete the proof.
\end{proof}

%%%%%%%%%%%%%%%%   3.More on $\phi_\ell$  %%%%%%%%%%%%%%%%%%%%%%%%%%%

\section{The sequence $\phi_{1},\phi_{2},\ldots,\phi_n$}   \label{s4}
The sequence $\phi_{1},\phi_{2},\ldots,\phi_{n}$ is not necessarily non-increasing. For example,
the path $P_{n}$ of $n$ vertices has $2=d_{1}=d_{n-2}>d_{n-1}=d_{n}=1,$
and it is immediate to check that
if $n \geq 6$ then $\phi_{1}=\phi_{2}=2 < \sqrt{n-1}=\phi_{n-1}=\phi_{n}.$
\medskip

Clearly that for all $1 \leq s < t \leq n,$ $d_{s}=d_{t}$ implies that $\phi_{s}=\phi_{t}.$
However, $\phi_{s}=\phi_{t}$ dose not imply $d_{s} = d_{t}.$
For example, in the graph with degree sequence $(4,3,3,2,1,1),$
one can check that $\phi_{4}=\phi_{5}=3$ but $d_{4} > d_{5}.$
\medskip

Recall that $d_{s}=d_{s+1}$ implies $\phi_{s}=\phi_{s+1}$ for $1 \leq s \leq n-1.$
The following proposition describes the shape of the sequence $\phi_1,$ $\phi_2$, $\ldots,$ $\phi_n.$

\begin{prop}   \label{prop4.1}
Suppose $d_{s}>d_{s+1}$ for $1 \leq s \leq n-1,$ and let $\succeq  \in \{ >, = \}.$
Then  $$\phi_{s} \succeq \phi_{s+1}~~ \hbox{iff~~} \sum_{i=1}^{s} d_{i} \succeq s(s-1).$$
\end{prop}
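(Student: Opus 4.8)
The plan is to work directly with the closed-form definition of $\phi_s$ and $\phi_{s+1}$ and reduce the claimed equivalence to a purely algebraic inequality between the quantities under the square roots. Recall that for each $s$,
$$
\phi_s = \frac{d_s - 1 + \sqrt{(d_s+1)^2 + 4\sum_{i=1}^{s-1}(d_i - d_s)}}{2}.
$$
The key observation I would exploit is that $\phi_s$ is characterized as the largest root of a quadratic: a short computation shows that $\phi = \phi_s$ is exactly the positive solution of
$$
\phi^2 - (d_s - 1)\phi - \Big(d_s + \sum_{i=1}^{s-1}(d_i - d_s)\Big) = 0,
$$
equivalently $\phi^2 + \phi = d_s\,\phi + \sum_{i=1}^{s-1}(d_i - d_s) + \phi$, which rearranges to
$$
\phi(\phi + 1) = (\phi+1)d_s + \sum_{i=1}^{s-1}(d_i - d_s).
$$
So I would \emph{define} a function $f_s(\phi) = \phi(\phi+1) - (\phi+1)d_s - \sum_{i=1}^{s-1}(d_i - d_s)$, which is an upward-opening quadratic in $\phi$ whose larger root is precisely $\phi_s$. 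Because $f_s$ is increasing for $\phi$ above its vertex (and $\phi_s \geq d_s \geq d_{s+1}$ keeps us on the increasing branch), comparing $\phi_s$ with $\phi_{s+1}$ reduces to evaluating the sign of $f_{s+1}$ at $\phi = \phi_s$, or symmetrically $f_s$ at $\phi_{s+1}$.

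The core step is then to compute the difference $f_{s+1}(\phi) - f_s(\phi)$ and read off its sign at the relevant root. Writing both out,
$$
f_{s+1}(\phi) - f_s(\phi) = (\phi+1)(d_s - d_{s+1}) - \Big(\sum_{i=1}^{s}(d_i - d_{s+1}) - \sum_{i=1}^{s-1}(d_i - d_s)\Big).
$$
The bracketed telescoping term simplifies: $\sum_{i=1}^{s}(d_i - d_{s+1}) - \sum_{i=1}^{s-1}(d_i - d_s) = (s-1)(d_s - d_{s+1}) + (d_s - d_{s+1}) = s(d_s - d_{s+1})$, using that the two partial sums share $s-1$ terms differing by $d_s - d_{s+1}$ each, plus the extra $i=s$ term. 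Hence
$$
f_{s+1}(\phi) - f_s(\phi) = (d_s - d_{s+1})\big(\phi + 1 - s\big).
$$
Since $f_s(\phi_s) = 0$, this gives $f_{s+1}(\phi_s) = (d_s - d_{s+1})(\phi_s + 1 - s)$, and because $d_s > d_{s+1}$ by hypothesis, the sign of $f_{s+1}(\phi_s)$ matches the sign of $\phi_s + 1 - s$.

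\textbf{To close the argument}, I would translate the sign of $f_{s+1}(\phi_s)$ back into the comparison of $\phi_s$ and $\phi_{s+1}$: since $\phi_{s+1}$ is the larger root of the increasing-on-the-right quadratic $f_{s+1}$, we have $f_{s+1}(\phi_s) \succeq 0$ iff $\phi_s \succeq \phi_{s+1}$ (with $\succeq$ running over $>$ and $=$ simultaneously, which is exactly why the statement is phrased with the variable relation symbol). It then remains to show $\phi_s + 1 - s \succeq 0$ iff $\sum_{i=1}^s d_i \succeq s(s-1)$. Evaluating the defining quadratic relation $\phi_s(\phi_s+1) = (\phi_s+1)d_s + \sum_{i=1}^{s-1}(d_i - d_s)$ at the threshold $\phi_s = s-1$ should convert the condition $\phi_s \succeq s-1$ into a linear condition on $\sum_{i=1}^s d_i$; indeed plugging $\phi = s-1$ into $f_s$ and simplifying the sum $\sum_{i=1}^{s-1} d_i = \sum_{i=1}^s d_i - d_s$ yields, after cancellation, a quantity proportional to $\sum_{i=1}^s d_i - s(s-1)$. \textbf{The main obstacle} I anticipate is purely bookkeeping: correctly handling the strict-versus-equality cases uniformly through the symbol $\succeq$, and making sure the monotonicity direction of each quadratic is justified (i.e.\ confirming $\phi_s, \phi_{s+1} \geq s-1$ or identifying on which branch the roots lie) so that "sign of $f$ at a point" genuinely controls the ordering of the roots rather than getting reversed. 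Once the branch/monotonicity is pinned down, the two sign equivalences chain together to give the claim.
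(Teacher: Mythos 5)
Your argument is correct, and it takes a genuinely different route from the paper's. The paper proves the proposition by brute-force manipulation of the closed-form expressions: it writes $\phi_{s} \succeq \phi_{s+1}$ out in radicals and squares twice, cancelling the common factor $d_{s}-d_{s+1}$ along the way, arriving at $\sum_{i=1}^{s}d_{i} \succeq s(s-1)$ in a chain of claimed equivalences. You instead characterize $\phi_{s}$ as the larger root of the upward quadratic $f_{s}(\phi)=(\phi+1)(\phi-d_{s})-\sum_{i=1}^{s-1}(d_{i}-d_{s})$, derive the clean identity $f_{s+1}(\phi)-f_{s}(\phi)=(d_{s}-d_{s+1})(\phi+1-s)$ (your telescoping computation checks out), and reduce the claim to two sign evaluations: $\phi_{s}\succeq\phi_{s+1}$ iff $f_{s+1}(\phi_{s})\succeq 0$ iff $\phi_{s}\succeq s-1$ iff $f_{s}(s-1)\preceq 0$ iff $\sum_{i=1}^{s}d_{i}\succeq s(s-1)$, using $d_{s}>d_{s+1}$ exactly where the paper cancels that factor. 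What your route buys: no squaring of radicals, so you never face the sign caveat that the paper's middle step actually glosses over (it squares $\sqrt{(d_{s}+1)^{2}+4\sum_{i=1}^{s-1}(d_{i}-d_{s})}\succeq 2s-(d_{s}+1)$ without remarking that the right-hand side can be negative, in which case squaring is not an equivalence; in your setup the quadratic's sign handles that case automatically). What it costs: you must pin down root locations, and the "obstacle" you flag resolves easily but should be stated explicitly: the smaller root of each $f_{t}$ equals $\bigl(d_{t}-1-\sqrt{(d_{t}+1)^{2}+4\sum_{i=1}^{t-1}(d_{i}-d_{t})}\,\bigr)/2 \leq -1$, while both evaluation points $\phi_{s}\geq d_{s}\geq 0$ and $s-1\geq 0$ lie strictly to its right, so the sign of $f_{t}$ at these points genuinely controls position relative to the larger root. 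Note that your alternative sufficient condition $\phi_{s}\geq s-1$ can fail (on a long path, $\phi_{5}=\sqrt{5}<4$), so the root-location formulation, not that inequality, is the one to use. Two minor fix-ups: your intermediate rearrangement "$\phi^{2}+\phi=d_{s}\phi+\sum_{i=1}^{s-1}(d_{i}-d_{s})+\phi$" has a typo (the trailing $+\phi$ should be $+d_{s}$), though your final $f_{s}$ is the correct one; and the last evaluation comes out as $f_{s}(s-1)=s(s-1)-\sum_{i=1}^{s}d_{i}$, i.e.\ proportional to $\sum_{i=1}^{s}d_{i}-s(s-1)$ with factor $-1$, which is consistent because this step carries the one sign reversal in your chain ($\phi_{s}\succeq s-1$ iff $f_{s}(s-1)\preceq 0$).
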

\begin{proof} Recall that
\begin{equation}
\phi_{s} = \frac{d_{s}-1+\sqrt{(d_{s}+1)^2+4\sum_{i=1}^{s-1}(d_{i} - d_{s})}}{2}. %\geq d_{\ell}
\nonumber
\end{equation} Consider the following equivalent relations step by step.
\begin{align*}
~~~&\phi_{s} \succeq \phi_{s+1}  \\
\Leftrightarrow~~~&
d_{s} - d_{s+1}+ \sqrt{(d_{s}+1)^{2}+4\sum_{i=1}^{s-1}(d_{i}-d_{s})} \\
 &~~~~~~~~~~~~\succeq\sqrt{(d_{s+1}+1)^{2}+4\sum_{i=1}^{s}(d_{i}-d_{s+1})}  \\
%%(d_{\ell} - d_{\ell+1})^2 + 2(d_{\ell} - d_{\ell+1})\sqrt{(d_{\ell}+1)^{2}+4\sum_{i=1}^{\ell}(d_{i}-d_{\ell})}
%\nonumber \\
%& + (d_{\ell}+1)^{2}+4\sum_{i=1}^{\ell}(d_{i}-d_{\ell}) \geq (d_{\ell+1}+1)^{2}+4\sum_{i=1}^{\ell}(d_{i}-d_{\ell+1})
%\nonumber \\
%\Leftrightarrow&
%(d_{\ell}-d_{\ell+1})\left( d_{\ell}+1+\sqrt{(d_{\ell}+1)^{2}+4\sum_{i=1}^{\ell}(d_{i}-d_{\ell})} \right) \geq 2\ell(d_{\ell}-d_{\ell+1})
%\nonumber \\
\Leftrightarrow~~~&
\sqrt{(d_{s}+1)^{2}+4\sum_{i=1}^{s-1}(d_{i}-d_{s})} \succeq 2s - (d_{s}+1)
\label{eq4.2} \\
\Leftrightarrow~~~&
(d_{s}+1)^{2}+4\sum_{i=1}^{s}(d_{i}-d_{s}) \succeq 4 s^{2} - 4 s(d_{s}+1) + (d_{s}+1)^{2}\\
\Leftrightarrow~~~&
\sum_{i=1}^{s}d_{i} \succeq s(s-1),
\end{align*}
where the third relation is obtained from the second  by taking square on both sides, simplifying it, and deleting the common term $d_{s}-d_{s+1}.$
\end{proof}

\medskip

%\begin{cor}  \label{cor4.2}
%Let $1 \leq \ell \leq n$ be the smallest integer such that $\sum_{i=1}^{\ell}d_{i} \leq \ell(\ell-1).$
%Then for any $1\leq i\leq n,$
%$$\phi_{i} = \min\{\phi_k~|~1\leq k\leq n\} ~~\hbox{iff~~} d_{i} = d_{\ell}.$$
%\end{cor}
%\begin{proof}
%\end{proof}

\begin{cor}  \label{cor4.2}
Let $3 \leq \ell \leq n$ be the smallest integer such that $\sum_{i=1}^{\ell}d_{i} < \ell(\ell-1).$
Then for $1 \leq j \leq n$ we have
$$\phi_{j} = \min\{\phi_k~|~1\leq k\leq n\}$$
if and only if $d_{j}=d_{\ell},$ or $d_{j}=d_{\ell-1}$ with $\sum_{i=1}^{\ell-1}d_{i} = (\ell-1)(\ell-2).$
\end{cor}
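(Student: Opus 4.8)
The plan is to translate the entire statement about $\phi$ into the single auxiliary quantity $g(s):=\sum_{i=1}^{s}d_i - s(s-1)$, whose sign governs the comparison of $\phi_s$ with $\phi_{s+1}$ via Proposition~\ref{prop4.1}. First I would record the two facts that drive everything. When $d_s>d_{s+1}$, Proposition~\ref{prop4.1} gives $\phi_s>\phi_{s+1}$, $\phi_s=\phi_{s+1}$, or $\phi_s<\phi_{s+1}$ according as $g(s)>0$, $=0$, or $<0$ (the strict-less case following by complementation, since exactly one of $>,=,<$ holds on each side); and when $d_s=d_{s+1}$ one has $\phi_s=\phi_{s+1}$ unconditionally. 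Hence the minimizers of $(\phi_k)$ are exactly the indices reachable from a global ``valley'' by a chain of step-equalities, and the problem reduces to pinning down the sign pattern of $g$.

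Second, I would show that $g$ is a strictly concave sequence: its increments $g(s+1)-g(s)=d_{s+1}-2s$ strictly decrease in $s$, because $d_{s+1}$ is non-increasing while $2s$ strictly increases. From strict concavity together with the minimality of $\ell$ I would extract
$$g(s)>0\ (1\le s\le \ell-2),\qquad g(\ell-1)\ge 0,\qquad g(s)<0\ (\ell\le s\le n).$$
The outer statements are routine: $g(s)\ge 0$ for $s\le \ell-1$ is just the minimality of $\ell$, and since $g(\ell)-g(\ell-1)<0$ all later increments are (strictly) smaller, so $g$ keeps decreasing and stays negative for $s\ge \ell$. The delicate point is strict positivity for $s\le \ell-2$: using $g(0)=0$, if $g(s_0)=0$ for some $1\le s_0\le \ell-2$, then a strictly concave sequence vanishing at the two points $0$ and $s_0$ would have to be strictly negative beyond $s_0$, contradicting $g(\ell-1)\ge 0$.

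Third, feeding this sign pattern back through the two basic facts yields $\phi_1\ge\cdots\ge\phi_\ell\le\cdots\le\phi_n$, so the minimum value is $\phi_\ell$, and it remains to read off which indices attain it. On the right ($j\ge \ell$) every strict degree drop forces a strict increase of $\phi$, since $g<0$ there, so $\phi_j=\phi_\ell$ iff $d_j=d_\ell$. On the left I would walk downward from $\ell$: the step $\phi_{\ell-1}=\phi_\ell$ holds automatically when $d_{\ell-1}=d_\ell$, and otherwise exactly when $g(\ell-1)=0$, i.e.\ $\sum_{i=1}^{\ell-1}d_i=(\ell-1)(\ell-2)$; any further leftward step crossing a strict degree drop occurs at an index $\le \ell-2$, where $g>0$ forces a strict increase and terminates the chain. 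Collecting cases gives precisely $d_j=d_\ell$, or $d_j=d_{\ell-1}$ with $\sum_{i=1}^{\ell-1}d_i=(\ell-1)(\ell-2)$, as claimed.

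I expect the strict-positivity claim $g(s)>0$ for $s\le \ell-2$ to be the main obstacle, as it is exactly what forbids spurious minimizers far to the left and is the only place where strict concavity of $g$ (rather than a one-sided monotonicity) is genuinely needed; everything else is bookkeeping with Proposition~\ref{prop4.1}.
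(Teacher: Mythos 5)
Your proof is correct, and its skeleton matches the paper's: both arguments funnel everything through Proposition~\ref{prop4.1} and the sign of $g(s)=\sum_{i=1}^{s}d_i-s(s-1)$ to establish the valley shape $\phi_1\ge\cdots\ge\phi_\ell\le\cdots\le\phi_n$ and then read off the minimizers. The genuine difference is your strict-concavity lemma (the increments $g(s+1)-g(s)=d_{s+1}-2s$ strictly decrease), which the paper does not have. The paper instead proves $g(s)\ge 0$ for $s\le\ell-1$ from minimality of $\ell$ and gets $g(t)<0$ for $t\ge\ell$ by an ad hoc induction ($g(t)<0$ forces $d_t<t-1$, hence $g(t+1)<0$), then closes with ``The result follows'' --- which establishes sufficiency and that $\phi_\ell$ is the minimum, but leaves the necessity direction essentially unargued, since the weak inequalities $\phi_s\ge\phi_{s+1}$ on the left do not exclude a spurious minimizer $j$ with $d_j>d_{\ell-1}$. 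Your concavity argument is exactly what fills that hole: it delivers the strict inequality $g(s)>0$ for $1\le s\le\ell-2$ (via the two zeros $g(0)=g(s_0)=0$ forcing $g(\ell-1)<0$), and as a byproduct it also replaces the paper's induction on the right half. Your appeal to the strict-less case of Proposition~\ref{prop4.1} by trichotomy is legitimate and correctly flagged. One small housekeeping point, which the paper glosses over as well: minimality of $\ell$ only covers $3\le s\le\ell-1$, so the cases $s=1,2$ need the one-line observation that a connected graph on $n\ge 3$ vertices has $g(1)=d_1>0$ and $g(2)=d_1+d_2-2>0$; with that noted, your argument is complete and in fact more rigorous than the published proof.
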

\begin{proof}
From Proposition~\ref{prop4.1},
$\sum_{i=1}^{\ell-1}d_{i} = (\ell-1)(\ell-2)$ implies $\phi_{\ell-1}=\phi_{\ell}.$
Also, clearly that $d_{j} = d_{\ell}$ implies $\phi_{j} = \phi_{\ell}.$
We show that $\phi_{\ell} = \min\{\phi_k~|~1\leq k\leq n\}$ in the following.

For $1 \leq s \leq \ell-1,$ from Proposition~\ref{prop4.1}
we have $\phi_{s} \geq \phi_{s+1}$ since $\sum_{i=1}^{s}d_{i} \geq s(s-1).$
For $\ell \leq t \leq n-1,$
note that $\sum_{i=1}^{t}d_{i} < t(t-1)$ implies $d_{t} < t-1,$
and hence $\sum_{i=1}^{t+1}d_{i} < t(t-1)+(t-1) < t(t+1).$
From Proposition~\ref{prop4.1} we have $\phi_{\ell} \leq \phi_{\ell+1} \leq \cdots \leq \phi_{n}$
since $\sum_{i=1}^{\ell}d_{i} < \ell(\ell-1).$
The result follows.
\end{proof}

%We immediately have the following corollary.
%\begin{cor}
%\begin{equation}
%\rho(G) \leq \min_{\ell \leq k \leq n} \left\{ \frac{d_{k}-1+\sqrt{(d_{k}+1)^2+4\sum_{i=1}^{k-1}(d_{i} - d_{k})}}{2} \right\},
%\nonumber
%\end{equation}
%where $\ell$ is the largest integer such that $d_{\ell} \geq \ell-1.$
%\end{cor}
%\begin{proof}
%It can be proved from Theorem~\ref{thm1.6} and Proposition~\ref{prop4.1}.
%\end{proof}

\section*{Acknowledgments}
This research is supported by the National Science Council of Taiwan R.O.C. under the project NSC 99-2115-M-009-005-MY3.

\bigskip

\noindent Chia-an Liu \hfil\break
Department of Applied Mathematics \hfil\break
National Chiao Tung University \hfil\break
1001 Ta Hsueh Road \hfil\break
Hsinchu, Taiwan 300, R.O.C. \hfil\break
Email: {\tt twister.imm96g@g2.nctu.edu.tw} \hfil\break
Ext: +886-3-5712121-56460 \hfil\break


\begin{thebibliography}{7}
\bibitem{m:88}
Henryk Minc,
\newblock{\em Nonnegative Matrices},
\newblock{John Wiley and Sons Inc., New York, 1988}.

\bibitem{bh:85}
\newblock{R. A. Brauldi and A. J. Hoffman, On the spectral radius of (0,1)-matrices},
\newblock{\em Linear Algebra and its Applications}, 65 (1985), 133-146.

\bibitem{s:87}
\newblock{Richard. P. Stanley and A. J. Hoffman, A bound on the spectral radius of graphs with e edges},
\newblock{\em Linear Algebra and its Applications}, 87 (1987), 267-269.

\bibitem{h:98}
\newblock{Yuan Hong, Upper bounds of the spectral radius of graphs in terms of genus},
\newblock{\em Journal of Combinatorial Theory}, Series B 74 (1998), 153-159.

\bibitem{hsf:01}
\newblock{Yuan Hong, Jin-Long Shu and Kunfu Fang, A sharp upper bound of the spectral radius of graphs},
\newblock{\em Journal of Combinatorial Theory}, Series B 81 (2001), 177-183.

\bibitem{sw:04}
\newblock{Jinlong Shu and Yarong Wu, Sharp upper bounds on the spectral radius of graphs},
\newblock{\em Linear Algebra and its Applications}, 377 (2004), 241-248.

%\bibitem{aw:11}
%Andries E. Brouwer and Willem H. Haemers,
%\newblock{\em Spectra of Graphs - Monograph -},
%\newblock{Springer-Verlag, 2011.}

\bibitem{d:11}
\newblock{Kinkar Ch. Das, Proof of conjecture involving the second largest signless Laplacian eigenvalue and the index of graphs},
\newblock{\em Linear Algebra and its Applications}, 435 (2011), 2420-2424.

\end{thebibliography}
\end{document}